\theoremstyle{plain}
\newtheorem*{thm A}{Theorem~A}
\newtheorem*{thm B}{Theorem~B}
\newtheorem*{thm C}{Theorem~C}
\newtheorem*{main 1}{Theorem~1}
\newtheorem*{main 2}{Theorem~2}
\newtheorem*{main 3}{Theorem~3}
\newtheorem*{coro 1}{Corollary~1}
\newtheorem*{coro 2}{Corollary~2}
\newtheorem*{pro A}{Proposition~A}
\newtheorem*{pro B}{Proposition~B}
\newtheorem*{rem}{Remark}
\newtheorem{theorem}{Theorem}[section]
\newtheorem{lemma}[theorem]{Lemma}
\def \GBt{G_2({\mathbb C}^{m+2})}
\def \GBo{G_2({\mathbb C}^{m+1})}
\def \QP{{\mathcal Q}^{\bot}}
\def \Q{\mathcal Q}
\def \N{\nabla}
\def \EN{{\eta}_{\nu}}
\def \EoK{{\eta}_1({\xi})}
\def \Kt{{\xi}_2}
\def \Ks{{\xi}_3}
\def \Ph{\phi}
\def \e{\eta}
\def \E{\eta}
\def \al{\alpha}
\def \xo{{\xi}_1}
\def \xtw{{\xi}_2}
\def \po{\phi_{1}}
\def \pn{\phi_{\nu}}
\def \etw{\eta_{2}}
\def \eth{\eta_{3}}
\def \xtw{{\xi}_2}
\def \xth{{\xi}_3}
\def \x{\xi}
\def \p{\phi}
\def \xt{\xi_{2}}
\def \xh{\xi_{3}}
\def \GBt{G_2({\mathbb C}^{m+2})}
\def \GBo{G_2({\mathbb C}^{m+1})}
\def \gtw{\widehat \nabla ^{(k)}}
\def \xm{\xi_{\mu}}
\def \xmo{\xi_{\mu+1}}
\def \xmt{\xi_{\mu+2}}
\def \qmo{q_{\mu+1}}
\def \qmt{q_{\mu+2}}
\def \pmx{\phi_{\mu}\xi}
\def \si{\sigma}
\def \vr{\tau}
\def \Xq{X_{\mathfrak h}}
\begin{document}

\title[g-Tanaka-Webster Reeb parallel Ricci tensor] {Real hypersurfaces in complex two-plane Grassmannians with Reeb parallel Ricci tensor in generalized Tanaka-Webster connection}

\vspace{0.2in}

\author[H. Lee, Y.J. Suh and C. Woo]{Hyunjin Lee, Young Jin Suh and Changhwa Woo}

\address{\newline
Hyunjin Lee
\newline The Center for Geometry and its Applications,
\newline Pohang University of Science \& Technology,
\newline Pohang 790-784, REPUBLIC OF KOREA}
\email{lhjibis@hanmail.net}

\address{\newline
Young Jin Suh and Changhwa Woo
\newline Department of Mathematics,
\newline Kyungpook National University,
\newline Daegu 702-701, REPUBLIC OF KOREA}
\email{yjsuh@knu.ac.kr} \email{legalgwch@knu.ac.kr}

\footnotetext[1]{{\it 2010 Mathematics Subject Classification} :
Primary 53C40; Secondary 53C15.}
\footnotetext[2]{{\it Key words} : Real hypersurfaces; complex
two-plane Grassmannians; Hopf hypersurface; generalized
Tanaka-Webster connection; Ricci tensor; Reeb parallel.}

\thanks{* This work was supported by Grant Proj. No. NRF-2011-220-C00002 from National Research Foundation of Korea.
The first author by Grant Proj. No. NRF-2012-R1A1A3002031, the second by Grant Proj. No. NRF-2012-R1A2A2A01043023. And the third author supported by NRF Grant funded by the Korean Government (NRF-2013-Fostering Core Leaders of Future Basic Science Program).}

\begin{abstract}
There are several kinds of classification problems for real hypersurfaces in complex two-plane Grassmannians $\GBt$. Among them, Suh classified Hopf hypersurfaces $M$ in $\GBt$ with Reeb parallel Ricci tensor in Levi-Civita connection. In this paper, we introduce a new notion of generalized Tanaka-Webster Reeb parallel Ricci tensor for $M$ in $\GBt$. By using such parallel conditions, we give complete classifications of Hopf hypersurfaces in $\GBt$.
\end{abstract}
\maketitle

\section*{Introduction}
\setcounter{equation}{0}
\renewcommand{\theequation}{0.\arabic{equation}}
\vspace{0.13in}

In this paper, let $M$ represent a real hypersurface in $\GBt$, $m \geq 3$, and~$S$ denote the Ricci tensor of $M$. Hereafter unless otherwise stated, we consider that $X,Y$, and $Z$ are any tangent vector fields on $M$.  Let $W$ be any tangent vector field on the distribution $\mathfrak h=\{X \in TM |\, X \bot \xi\}$. $k$ stands for a non-zero constant real number.

The classification of real hypersurfaces in Hermitian symmetric space is one of interesting parts in the field of differential geometry. Among them, we introduce a complex two-plane Grassmannian~$\GBt$ defined by the set of all complex two-dimensional linear subspaces in ${\mathbb C}^{m+2}.$ It is a kind of Hermitian symmetric space of compact irreducible type with rank~$2$. Remarkably, the manifolds are equipped with both a K\"{a}hler structure $J$ and a quaternionic K\"{a}hler structure ${\mathfrak J}$ satisfying $JJ_{\nu}=J_{\nu}J$ $(\nu=1,2,3)$ where $\{J_{\nu}\}_{\nu=1,2,3}$ is an orthonormal basis of $\mathfrak J$. When $m=1$, $G_2({\Bbb C}^3)$ is isometric to the two-dimensional complex projective space ${\mathbb C}P^2$ with constant holomorphic sectional curvature eight. When $m=2$, we note that the isomorphism $\text{Spin}(6) \simeq \text{SU}(4)$ yields an isometry between $G_2({\Bbb C}^4)$ and the real Grassmann Manifold $G_2^+({\mathbb R}^6)$ of oriented two-dimensional linear
subspaces in ${\Bbb R}^6$. In this paper we assume $m$ is not less than $3$. (see \cite{BS1}).

\vskip 3pt

Let $N$ be a local unit normal vector field of $M$. Since $\GBt$ has the K\"{a}hler structure $J$, we may define a {\it Reeb vector field} $\xi =-JN$ and a $1$-dimensional distribution $[\xi]=\text{Span}\{\,\xi\}$. The Reeb vector field~$\xi$ is said to be a {\it Hopf} if it is invariant under the shape operator $A$ of $M$. The $1$-dimensional foliation of $M$ by the integral curves of $\xi$ is said to be a {\it Hopf foliation} of $M$. We say that $M$ is a {\it Hopf hypersurface} if and if the Hopf foliation of $M$ is totally geodesic. By the formulas in \cite[Section~$2$]{LS}, it can be easily seen that $\xi$ is Hopf if and only if $M$ is Hopf.

\vskip 3pt

From the quaternionic K\"{a}hler structure $\mathfrak J$ of $\GBt$, there naturally exists {\it almost contact 3-structure} vector field $\xi_{\nu}=-J_{\nu}N$, $\nu=1,2,3$. Put $\QP = \text{Span}\{\,\xi_1, \xi_2, \xi_3\}$, which is a 3-dimensional distribution in a tangent vector space $T_{x}M$ of $M$ at $x \in M$. In addition, $\Q$ stands for the orthogonal complement of $\QP$ in $T_{x}M$. It becomes the quaternionic maximal subbundle of $T_{x}M$. Thus the tangent space of $M$ consists of the direct sum of $\Q$ and $\QP$ as follows: $T_{x}M =\Q\oplus \QP$.

\vskip 3pt

For two distributions $[\xi]$ and $\QP$ defined above, we may consider two natural invariant geometric properties under the shape operator $A$ of $M$, that is, $A [\xi] \subset [\xi]$ and $A\QP \subset \QP$. By using the result of Alekseevskii \cite{Al-01}, Berndt and Suh~\cite{BS1} have classified all real hypersurfaces with two natural invariant properties in $\GBt$ as follows:

\begin{thm A}
Let $M$ be a real hypersurface in $\GBt$, $m \geq 3$. Then both $[\xi]$ and $\QP$ are invariant under the shape operator of $M$ if and only if
\begin{enumerate}[\rm(A)]
\item {$M$ is an open part of a tube around a totally geodesic $\GBo$ in $\GBt$, or} \
\item {$m$ is even, say $m = 2n$, and $M$ is an open part of a tube around a totally geodesic ${\mathbb H}P^n$ in $\GBt$}.
\end{enumerate}
\end{thm A}

\noindent In the case~$(A)$, we say $M$ is of Type~$(A)$. Similarly in the case~$(B)$ we say $M$ is of Type~$(B)$. Using Theorem~$\rm A$, geometricians have given characterizations for Hopf hypersurfaces in $\GBt$ with geometric quantities, shape operator, normal (or structure) Jacobi operator, Ricci tensor, and so on. Actually, Lee and Suh~\cite{LS} gave a characterization for a real hypersurface of Type~$(B)$ as follows:
\begin{thm B}
Let $M$ be a Hopf hypersurface in $\GBt$, $m \geq 3$. Then $\xi$ belongs to the
distribution $\Q$ if and only if $M$ is locally congruent to an open part of a tube around a totally geodesic ${\mathbb H}P^n$ in $\GBt$, $m=2n$. In other words, $M$ is locally congruent to a real hypersurface of Type~(B).
\end{thm B}

\noindent In particular, there are various well-known results with respect to $S$ on Hopf hypersurfaces in $\GBt$. From such a point of view, Suh~\cite{S02} gave a characterization of a model space of Type~$(A)$ in $\GBt$ under the condition~ $S \p = \p S$ where $\p$ denotes the structure tensor field of $M$. In~\cite{S} and \cite{S01}, he also considered the parallelism of Ricci tensor with respect to the Levi-Civita connection and gave, respectively,
\begin{thm C}\cite{S01}
Let $M$ be a real hypersurface in $\GBt$, $m \geq 3$ with non-vanishing geodesic Reeb flow. If the Ricci tensor is Reeb parallel, $\N_{\x}S=0$. Then $M$ is locally congruent to one of the following:
\begin{enumerate}[\rm(i)]
\item {a tube over a totally geodesic $G_2({\mathbb C}^{m+1})$ in $G_2({\mathbb C}^{m+2})$ with radius $r \neq \frac{\pi}{4 \sqrt{2}}$, or} \
\item {a tube over a totally geodesic ${\mathbb H}P^n$, $m = 2n$, in $G_2({\mathbb C}^{m+2})$ with radius $r$ such that $\cot^{2}(2r) = \frac{1}{2m-1}$ and $\x$-parallel eigenspaces $T_{\cot r}$ and $T_{\tan r}$ }.
\end{enumerate}
\end{thm C}

Motivated by these works, we define the notion of Reeb parallel Ricci tensor with respect to the generalized Tanaka-Webster connection for a real hypersurface $M$ in $\GBt$. In order to do this, we first define the generalized Tanaka-Webster connection~$\gtw$ on $M$ given by
\begin{equation*}
\gtw_{X}Y = \nabla_X Y+g(\phi AX,Y)\xi -\eta(Y)\phi AX-k\eta(X)\phi Y,
\end{equation*}
where $k$ is a non-zero real number (see~\cite{CH1},~\cite{CH2},~ \cite{JKLS}). Hereafter, unless otherwise stated, a GTW connection means a generalized Tanaka-Webster connection. In addition, we put
\begin{equation*}
F_{X}^{(k)}Y = g(\phi AX,Y)\xi -\eta(Y)\phi AX-k\eta(X)\phi Y.
\end{equation*}
Then the operator $F_{X}^{(k)}$ becomes a skew-symmetric (1,1) type tensor, that is, $g(F_{X}^{(k)}Y, Z)=-g(Y, F_{X}^{(k)}Z)$ for any tangent vector fields $X, Y$, and~$Z$ on $M$ and said to be {\it Tanaka-Webster} (or {\it $k$-th-Cho}) {\it operator} with respect to $X$.

\vskip 3pt

Related to this connection, the Ricci tensor $S$ is said to be {\it generalized Tanaka-Webster Reeb parallel} (in short, {\it GTW-Reeb parallel}) if the covariant derivative in GTW connection $\gtw$ of $S$ along $\x$ is vanishing, that is, $(\gtw_{\x}S)Y=0$. From this, we naturally see that this notion is weaker than generalized Tanaka-Webster parallel (shortly, GTW-parallel) Ricci tensor, that is, $(\gtw_{X} S)Y =0$. Recently, P\'{e}rez and Suh~\cite{PS2} proved the non-existence of Hopf hypersurfaces in $\GBt$, $m \geq 3$, with GTW-parallel Ricci tensor. From such a viewpoint, we assert:
\begin{main 1}
Let $M$ be a Hopf hypersurface in complex two-plane Grassmannians $\GBt$, $m \geq 3$, with $\al=g(A\xi,\xi)\neq 2k$. The Ricci tensor $S$ of $M$ is GTW-Reeb parallel if and only if $M$ is locally congruent to one of the following:
\begin{enumerate}[\rm(i)]
\item {a tube over a totally geodesic $G_2({\mathbb C}^{m+1})$ in $G_2({\mathbb C}^{m+2})$ with radius $r$ such that  $r \neq \frac{1}{2\sqrt{2}}\cot^{-1}(\frac{k}{\sqrt{2}})$, or}
\item {a tube over a totally geodesic ${\mathbb H}P^n$, $m = 2n$, in $G_2({\mathbb C}^{m+2})$ with radius $r$ such that $r=\frac{1}{2}\cot^{-1}(\frac{-k}{4(2n-1)})$.}
\end{enumerate}
\end{main 1}

For the case $\al=2k$, the Reeb vector field $\x$ of Hopf hypersurface $M$ with GTW-Reeb parallel Ricci tensor belongs to either $\Q$ or $\QP$. So, for the case $\x \in \QP$, we obtain that the trace $h$ of the shape operator $A$ is constant along $\x$, that is, $\x h=0$. In addition for the case $\x \in \Q$ we have the following:
\begin{coro 1}
Let $M$ be a real hypersurface in complex two-plane Grassmannians $\GBt$, $m \geq 3$, with GTW-Reeb parallel Ricci tensor for $\al=2k$. If $\x$ belongs to the distribution $\Q$, then $M$ is locally congruent to an open part of a tube around a totally geodesic $\mathbb HP^{n}$, $m=2n$, in $\GBt$ with radius $r$ such that $r=\frac{1}{2}\tan^{-1}\sqrt{2n-1}$.
\end{coro 1}

\noindent On the other hand, we consider the notion of GTW-Reeb parallel Ricci tensor on $\mathfrak h$, that is, $(\gtw_{\x}S)W=0$ for any $W \in \mathfrak h$. Then by virtue of Theorem~$\rm C$ for the case $\al = 2k$, we assert the following:
\begin{main 2}\label{main 2}
Let $M$ be a Hopf hypersurface in complex two-plane Grassmannians $\GBt$, $m \geq 3$, with $\al=2k$. The Ricci tensor of $M$ satisfies the Reeb parallelism on $\mathfrak h$ in both GTW and Levi-Civita connections, that is, $(\gtw_{\x}S)W=0$ and $(\N_{\x}S)W=0$ for any $W \in \mathfrak h$ if and only if $M$ is locally congruent to an open part of a tube around a totally geodesic $\GBo$ in $\GBt$ with radius $r$ such that $ r = \frac{1}{2\sqrt{2}} \cot^{-1}(\frac{k}{\sqrt{2}})$.
\end{main 2}

Moreover, as a generalization of the assumption $\gtw_{\x}S =0= \N_{\x}S$ on $\mathfrak h$ in Theorem~$2$, we want to consider that $\gtw_{\x}S = \N_{\x}S$, that is, the Reeb parallel Ricci tensor in GTW connection coincides with the Reeb parallel Ricci tensor in Levi-Civita connection. This condition has a geometric meaning such that $S$ commutes with the Tanaka-Webster operator $F_{\x}$, that is, $S \cdot F_{\x} = F_{\x} \cdot S$. This meaning gives any eigenspaces of $S$ are invariant by the Tanaka-Webster operator $F_{\x}$. From such a point of a view, we have the following:
\begin{main 3}
Let $M$ be a Hopf hypersurface in complex two-plane Grassmannians $\GBt$, $m \geq 3$. Then $\gtw_{\x}S = \N_{\x}S$ if and only if $M$ is locally congruent to an open part of a tube around a totally geodesic $\GBo$ in $\GBt$.
\end{main 3}

\noindent But for the case where the derivative of the Ricci tensor in GTW connection is equal to the derivative in Levi-Civita connection, that is, $\gtw_{X} S = \N_{X} S$ for any $X \in TM$, we assert the following:
\begin{coro 2}
There does not exist any Hopf hypersurface in complex two-plane Grassmannians $\GBt$, $m \geq 3$, satisfying $(\gtw_{X}S)Y = (\N_{X}S)Y$ for arbitrary tangent vector fields $X$ and $Y$ on $M$.
\end{coro 2}

\noindent Obviously, we know that the condition $\gtw_{X} S = \N_{X} S$ has a geometric meaning that any eigenspaces of $S$ are invariant by the Tanaka-Webster operator $F_{X}$. Recently, P\'{e}rez and Suh~\cite{PeSuh} investigated the Levi-Civita and GTW covariant derivatives for the shape operator or the structure Jacobi operator of real hypersurfaces in complex projective space $\mathbb{C} P^{m}$. Moreover, in \cite{JLS} Jeong, Lee and Suh gave a characterization of Hopf hypersurfaces in $\GBt$ with $\gtw A = \N A$.

\vskip 3pt

In this paper, we refer \cite{Al-01}, \cite{BS1}, \cite{JMPS}, \cite{LS}, \cite{PeSuWa} and \cite{S02} for Riemannian geometric structures of $\GBt$ and its geometric quantities, respectively.
In order to get our results, in sections~\ref{section 3} we will give the fundamental formulas related to the Reeb parallel Ricci tensor. In section~\ref{section 4}, we want to give a complete proof of Theorem~$1$ for $\al=g(A\xi, \xi) \neq 2k$. In section~\ref{section 5} we will consider the case $\al = 2k$ and give a proof of Corollary~$1$ and Theorem~$2$. Finally, in section~\ref{section 6} we will give a complete proof of Theorem~$\rm 3$ and Corollary~$2$.

\vspace{0.15in}

\section{GTW-Reeb parallel Ricci tensor}\label{section 3}
\setcounter{equation}{0}
\renewcommand{\theequation}{3.\arabic{equation}}
\vspace{0.13in}

From \cite{PS}, the Ricci tensor $S$ of a real hypersurface $M$ in $\GBt$, $m \geq 3$, is given by
\begin{equation} \label{3.1}
\begin{split}
SX & = \sum_{i=1}^{4m-1} R(X,e_i)e_i \\
   & = (4m+7)X - 3\eta(X)\xi + hAX - A^2 X \\
     & \quad + \sum_{\nu =1}^3 \{ - 3 \eta_{\nu}(X)\xi_{\nu} + \eta_{\nu}(\xi)\phi_{\nu}\phi X-\eta(\phi_{\nu}X)\phi_{\nu}\xi -\eta(X)\eta_{\nu}(\xi)\xi_{\nu} \}
\end{split}
\end{equation} where $h$ denotes the trace of the shape operator $A$, that is, $h=\text{Tr}A$.

And we also have
\begin{equation}\label{3.3}
\begin{split}
(\nabla_{X}S)Y & = - 3g(\phi AX, Y) \xi - 3\eta(Y) \phi AX \\
                 & \quad - 3\sum_{\nu=1}^{3} \Big\{ g(\phi_{\nu}AX, Y) \xi_{\nu} + \eta_{\nu} (Y) \phi_{\nu}AX \Big\}\\
                 &\quad + \sum_{\nu=1}^{3}\Big\{2g(\phi AX, \xi_{\nu})\phi_{\nu}\phi Y + g(AX, \phi_{\nu}\phi Y)\phi_{\nu}\xi \\
                 &\quad \ \ - \eta(Y) g(AX, \xi_{\nu})\phi_{\nu}\xi + \eta_{\nu}(\phi Y) g(AX, \xi) \xi_{\nu}-\eta_{\nu}(\phi Y) \phi_{\nu} \phi AX \\
                 &\quad \ \ - \eta(Y) g(\phi AX, \xi_{\nu})\xi_{\nu}-\eta(Y) g(\phi_{\nu}AX, \xi)\xi_{\nu}\Big\}\\
                 &\quad +(Xh)AY +h(\nabla_{X}A)Y - (\nabla_{X}A)AY -A(\nabla_{X}A)Y.
\end{split}
\end{equation}

\noindent Substituting $X=\x$ into \eqref{3.3} and using the condition that $M$ is Hopf, that is, $A \x = \al \x$, we get
\begin{equation}\label{3.4}
\begin{split}
(\nabla_{\x}S)Y & = -4 \al \sum_{\nu=1}^{3} \Big\{ g(\phi_{\nu}\x, Y) \xi_{\nu} + \eta_{\nu} (Y) \phi_{\nu}\x \Big\}
                    +(\x h)AY\\
                  & \quad  +h(\nabla_{\x}A)Y - (\nabla_{\x}A)AY -A(\nabla_{\x}A)Y.
\end{split}
\end{equation}

In this section we assume that $M$ is a Hopf hypersurface in $\GBt$ with GTW-Reeb parallel Ricci tensor, that is, $S$ satisfies:
\begin{equation}\label{C-1}
(\hat{\nabla}_\xi^{(k)}S)X =0.
\tag{C-1}
\end{equation}
By the definition of GTW connection $\gtw$, the covariant derivative of $S$ with respect to the GTW connection along $\x$ becomes
\begin{equation} \label{3.5}
\begin{split}
(\widehat{\nabla}_\xi^{(k)}S)X & = \widehat{\nabla}_\xi^{(k)}(SX)-S(\widehat{\nabla}_\xi^{(k)}X)    \\
                               & = \nabla_\xi (SX)+g(\phi A\xi,SX)\xi -\eta(SX)\phi A\xi-k\eta(\xi)\phi SX   \\
                                 & \quad \quad -S(\nabla_\xi X)-g(\phi A\xi,X)S\xi +\eta(X)S\phi A\xi+k\eta(\xi)S\phi X \\
                               & =  (\nabla_\xi S)X -k \phi SX + k S\phi X.
\end{split}
\end{equation}
Thus the condition~\eqref{C-1} is equivalent to
\begin{equation} \label{3.6}
(\nabla_\xi S)X = k\phi SX -kS\phi X,
\end{equation}
it yields
\begin{equation} \label{3.7}
\begin{split}
& 4(k-\al) \sum_{\nu =1}^3 \Big \{\eta_{\nu}(\p X)\x_{\nu} -  \EN(X)\pn\x \Big\} \\
& \ \  =  (\x h)AX +h(\nabla_{\x}A)X - (\nabla_{\x}A)AX -A(\nabla_{\x}A)X - kh\Ph AX\\
& \quad \quad \ \  + k\Ph A^2 X + khA\Ph X - kA^2 \Ph X\\
\end{split}
\end{equation}
from \eqref{3.1}, \eqref{3.3} and \cite[Section~$2$]{LCW}.

\vskip 5pt
Using these equations, we prove that $\x$ belongs to either $\Q$ or $\QP$, where $M$ is a Hopf hypersurface in $\GBt$ with GTW-Reeb parallel Ricci tensor.

\begin{lemma}\label{lemma 3.1}
Let $M$ be a Hopf hypersurface in $\GBt$, $m \geq 3$. If $M$ has GTW-Reeb parallel Ricci tensor, then $\xi$ belongs to either $\Q$ or $\QP$.
\end{lemma}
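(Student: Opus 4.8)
The plan is to show that the ``mixing'' of $\xi$ between $\Q$ and $\QP$ is forced to vanish. First I would fix the standard adapted decomposition of the Reeb vector field: writing $\xi = \cos t\, X_0 + \sin t\, \xi_1$ with $X_0$ a unit vector in $\Q$ and choosing the orthonormal basis $\{J_1,J_2,J_3\}$ of $\mathfrak J$ so that $\xi_1 \in \QP$ realises the $\QP$-component of $\xi$; then $\eta_1(\xi) = \sin t$ and $\eta_2(\xi) = \eta_3(\xi) = 0$. With this normalisation the assertion ``$\xi \in \Q$ or $\xi \in \QP$'' is equivalent to $\sin t\cos t = 0$, so the whole lemma reduces to proving the single scalar identity $\eta_1(\xi)\,g(\xi, X_0) = 0$.

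The main input is the fundamental identity \eqref{3.7}, which must hold for every tangent $X$. Before it can be exploited, the terms $(\nabla_\xi A)X$, $(\nabla_\xi A)AX$ and $A(\nabla_\xi A)X$ on its right-hand side have to be made algebraic. I would do this via the Codazzi equation of $M$ in $\GBt$: setting one argument equal to $\xi$ and using the Hopf condition $A\xi = \al\xi$ together with $(\nabla_X A)\xi = (X\al)\xi + \al\phi AX - A\phi AX$, one expresses $(\nabla_\xi A)X$ as $(\nabla_X A)\xi$ minus the explicit curvature contribution of the Grassmannian. Substituting this back turns \eqref{3.7} into a purely tensorial relation among $A$, $\phi$, the $\phi_\nu$, the $\xi_\nu$ and the functions $\eta_\nu(\xi)$, the only surviving derivatives being the scalars $\x h$ and $\x\al$, both governed by the standard gradient formulas for a Hopf hypersurface (so that $\x\al=0$ and $\x h$ drops out after contraction). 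An equivalent, sometimes cleaner, route is to contract \eqref{3.7} directly with $\x$, using symmetry of $\nabla_\x A$ and $(\nabla_\x A)\x = (\x\al)\x$ to remove the covariant-derivative terms.

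With an algebraic identity in hand, the conclusion is extracted by specialisation and by pairing against the adapted frame. Evaluating at $X=\x$ (so that $\phi\x=0$, $A\x=\al\x$) already isolates the cross term $4(k-\al)\sum_{\nu=1}^3 \eta_\nu(\x)\phi_\nu\x$ on the left; taking inner products of the resulting vector equation with $\phi_1\x$, with $\xi_1$ and with a vector in $\Q$, and using $\phi\xi_\nu=\phi_\nu\x$ and the normalisation $\eta_2(\x)=\eta_3(\x)=0$, I would reduce everything to one scalar relation whose coefficient is $\eta_1(\x)\cos t$, that is $\sin t\cos t$. Since $\al\neq 2k$ is \emph{not} assumed in this lemma, the factor $(k-\al)$ cannot be divided out unconditionally, so the ranges $\al=k$ and $\al\neq k$ may need to be separated, the degenerate case $\al=k$ being handled by feeding $k-\al=0$ back into the algebraic identity and re-contracting.

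I expect the main obstacle to be precisely this elimination step: converting the three covariant-derivative terms of \eqref{3.7} into algebraic data without losing track of the quaternionic corrections produced by the Grassmannian curvature, and then cleanly separating the $\Q$- and $\QP$-components so that only the product $\sin t\cos t$ survives. The bookkeeping of the $\sum_{\nu=1}^3$ terms --- in particular keeping $\eta_\nu(\phi X)$, $\eta_\nu(\x)$ and $\phi_\nu\x$ distinct while repeatedly invoking $JJ_\nu=J_\nu J$ --- is where errors are most likely and where the $\al=k$ case distinction must be watched carefully.
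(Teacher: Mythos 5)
Your main computation is exactly the paper's: with $\xi=\eta(X_0)X_0+\eta(\xi_1)\xi_1$, substituting $X=\xi$ into \eqref{3.7} and contracting the resulting vector equation with $\phi_1\xi$ yields $4(k-\al)\,\eta_1(\xi)\,\eta^2(X_0)=0$, whence the trichotomy $\eta(\xi_1)=0$ (so $\xi\in\Q$), $\eta(X_0)=0$ (so $\xi\in\QP$), or $\al=k$. One remark: the elimination step you anticipate as the main obstacle is not needed for this lemma. Once $X=\xi$, every covariant-derivative term on the right of \eqref{3.7} is already a multiple of $\xi$ (one only needs $(\N_{\x}A)\x=(\x\al)\x$ and $A\x=\al\x$), and all the $k$-terms vanish because $\phi\xi=0$; the contraction with $\phi_1\xi$ then annihilates the entire right-hand side without any Codazzi manipulation. (Your parenthetical claim that $\x\al=0$ by ``standard gradient formulas'' is not a valid general fact for Hopf hypersurfaces in $\GBt$, and the paper does not use it --- note the surviving $(\x\al)$-terms in \eqref{3.8} --- but it is also not needed here.)

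The genuine gap is your treatment of the case $\al=k$. The plan to ``feed $k-\al=0$ back into the algebraic identity and re-contract'' produces nothing: when $k=\al$ the entire left-hand side of \eqref{3.7} vanishes, and at $X=\xi$ the remaining identity is a relation among $\x h$, $\x\al$ and $\al$ along the $\xi$-direction, which carries no information about whether $\xi$ mixes $\Q$ and $\QP$. The paper closes this case by an input external to \eqref{3.7}: $\al=k$ forces $\al$ to be a \emph{nonzero constant}, and then the gradient formula for $\al$ on a Hopf hypersurface in $\GBt$ (Lemma~1 of \cite{BS1}), namely $Y\al=(\x\al)\eta(Y)-4\sum_{\nu=1}^{3}\eta_{\nu}(\x)\eta_{\nu}(\phi Y)$, yields $\sum_{\nu}\eta_{\nu}(\x)\eta_{\nu}(\phi Y)=0$ for all $Y$, which is precisely the condition that $\xi$ belongs to $\Q$ or to $\QP$. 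Without this (or some equivalent exploitation of the constancy of $\al$), your argument proves the lemma only under the extra hypothesis $\al\neq k$.
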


\begin{proof}
In order to prove this lemma, we put
\begin{equation}\label{**}
\xi = \eta(X_{0})X_{0}+\eta(\xi_{1})\xi_{1}
\tag{**}
\end{equation}
for some unit vectors $X_{0} \in \Q$ and $\xi_{1} \in \QP$. Putting $X=\xi$ in (\ref{3.7}), by \eqref{**} and basic formulas in \cite[Section 2]{LCW}, it follows that
\begin{equation} \label{3.8}
4(k - \al) \EoK\po\x = \al(\x h)\x - h(\x \al)\x - 2\al (\x \al)\x,
\end{equation}
where we have used $(\N_{\x}A)\x = (\x \al) \x$ and $(\N_{\x}A)A\x=\al (\x \al) \x$.

\noindent Taking the inner product of \eqref{3.8} with $\po\x$, we have
\begin{equation}\label{3.10}
4(k -\al)\EoK\eta^{2}(X_0)=0,
\end{equation}
because of $\e^{2}(X_{0}) + \e^{2}(\xo) =1$. From this, we have the following three cases.

\vskip 3pt
\noindent {\bf Case 1\,:} \quad $\al = k$.
\vskip 3pt
For this case, we see that $\al$ becomes a non-zero real number. Using the equation in \cite[Lemma~$1$]{BS1}, we assert that $\xi$ belongs to either $\Q$ or $\QP$.

\vskip 3pt
\noindent {\bf Case 2\,:} \quad $\eta(\xi_{1})=0$.
\vskip 3pt

By the notation \eqref{**}, we see that $\xi$ belongs to $\Q$.

\vskip 3pt
\noindent {\bf Case 3\,:} \quad $\E(X_0)=0$.
\vskip 3pt

This case implies that $\xi$ belongs to $\QP$ from \eqref{**}.

\vskip 5pt

\noindent Accordingly, summing up these cases, the proof of our Lemma is completed.
\end{proof}

\vspace {0.15in}


\section{Proof of Theorem~$1$}\label{section 4}
\setcounter{equation}{0}
\renewcommand{\theequation}{4.\arabic{equation}}
\vspace{0.13in}

In this section, let $M$ be a Hopf hypersurface, $\alpha\neq 2k$, in
$\GBt$ with GTW-Reeb parallel Ricci
tensor. Then by Lemma~\ref{lemma 3.1} we shall divide our
consideration in two cases depending on $\xi$
belongs to either $\QP$ or $\Q$,
respectively.

\vskip 3pt

First of all, if we assume $\x \in \Q$, then a Hopf hypersurface in $\GBt$, $m \geq 3$, with GTW-Reeb parallel Ricci tensor and $\al =g(A\x, \x) \neq 2k$ is locally congruent to a real hypersurface of Type~$(B)$ by virtue of Theorem~$\rm B$ given in the introduction.

\vskip 3pt

Next let us consider the case, $\xi \in \QP$. Accordingly, we may put $\xi=\xi_1$. Since $M$ is a Hopf hypersurface with GTW-Reeb parallel Ricci tensor, the equation~\eqref{3.7} becomes
\begin{equation} \label{4.1}
\begin{split}
&(\xi h)AX+h(\N_\x A)X-(\N_\x A)AX-A(\N_\x A)X \\
&=k(h\p AX-\p A^{2}X-h A\p X+A^{2}\p X).
\end{split}
\end{equation}
From the Codazzi equation~\cite[Section~$2$]{LCW} and differentiating $A\xi=\alpha\xi$, we obtain
\begin{equation*} \label{4.2}
\begin{split}
(\N_\x A)X & = (\N_X A)\xi+\p X+\po X+2\eth(X)\xt-2\etw(X)\xth \\
           & = (X\alpha)\x+\al \p AX-A\p AX +\p X+\po X+2\eth(X)\xt-2\etw(X)\xth.
\end{split}
\end{equation*}

\noindent Using the equation~\cite[Lemma 2.1]{LCW} and the previous one, we get
\begin{equation*} \label{4.4}
(\N_\x A)X=\frac{\al}{2}\p AX-\frac{\al}{2}A\p X+ (\x\al)\eta (X)\x.
\end{equation*}
Therefore from this, \eqref{4.1} can be written as
\begin{equation} \label{4.6}
(\xi h)AX + \tilde{\kappa} h\p AX-\tilde{\kappa} h A\p X +(h-2\al)(\x\al)\e(X)\x-\tilde{\kappa} \p A^{2}X+ \tilde{\kappa} A^{2}\p X=0,
\end{equation}
where $\tilde{\kappa} = (\frac{\al}{2}-k)$.

Since $\tilde{\kappa} \neq 0$ is equivalent to the given condition $\al \neq 2k$, \eqref{4.6} yields
\begin{equation} \label{4.7}
\frac{(\xi h)}{\tilde{\kappa}}AX + h\p AX - h A\p X + \frac{(h-2\al)}{\tilde{\kappa}}(\x\al) \e(X)\x-\p A^{2}X+A^{2}\p X=0.
\end{equation}

Now we consider the case $\x h=0$. Then \eqref{4.7} can be reduced to
\begin{equation}\label{4.8}
h\p AX - h A\p X + \frac{(h-2\al)}{\tilde{\kappa}}(\x\al)\e(X)\x-\p A^{2}X+A^{2}\p X=0.
\end{equation}
Taking the inner product of \eqref{4.8} with $\x$, we have $\frac{(h-2\al)}{\tilde{\kappa}}(\x\al)\e(X)=0$. Thus (\ref{4.8}) becomes
\begin{equation} \label{4.9}
\begin{split}
h\p AX-\p A^{2}X-h A\p X+A^{2}\p X=0.
\end{split}
\end{equation}

On the other hand, from the equation~\eqref{3.1} we calculate
\begin{equation*}
S\p X - \p SX = h A\p X - A^{2} \p X - h \p AX + \p A^{2}X,
\end{equation*}
then by \eqref{4.9} it follows that $S\p X = \p SX$ for any tangent vector field $X$ on $M$. Hence, by Suh~\cite{S02} we assert that $M$ satisfying our assumptions must be a model space of Type~$(A)$.

\vskip 3pt

We now assume $\xi h\neq 0$. Putting $\si=\frac{(\xi h)}{\tilde{\kappa}}(\neq 0)$ and
$\vr=\frac{(h-2\al)}{\tilde{\kappa}} (\x \al)$, the equation~(\ref{4.7}) becomes
\begin{equation} \label{4.10}
\si AX + h \p AX - h A\p X + \vr \e(X)\x - \p A^{2}X + A^{2}\p X=0.
\end{equation}

\noindent Applying $\p$ to \eqref{4.10} and replacing $X$ by $\p X$ in \eqref{4.10}, respectively, we
get the following two equations:
\begin{equation*} \label{4.11}
\si \p AX - h AX + h \al \e(X) \x - h \p A\p X + A^{2}X - \al^{2}\e(X)\x + \p A^{2}\p X=0
\end{equation*}
and
\begin{equation*} \label{4.12}
\si A\p X + h\p A\p X + h AX - h \al\e(X)\x - \p A^{2}\p X - A^{2}X + \al^{2}\e(X)\x=0.
\end{equation*}

\noindent Summing up the above two equations, we obtain $\p A+ A\p=0$. Thus from this, the equation~\eqref{4.10} implies
\begin{equation*} \label{4.14}
\si AX+ 2h\p AX + \vr \e(X)\x=0.
\end{equation*}


Let us $\Xq$ be the orthogonal projection of $X$ onto the distribution $\mathfrak h =\{X \in TM|\, X \bot \xi\}$. Inserting this into the previous equation yields
\begin{equation*} \label{4.15}
\si A \Xq + 2h\p A \Xq = 0.
\end{equation*}

\noindent In addition, applying $\p$ to this equation, it follows
\begin{equation*} \label{4.16}
\si \p A \Xq - 2h A \Xq =0.
\end{equation*}
Thus we obtain
\begin{equation*}
\begin{pmatrix}
      \si & 2h \\
      -2h & \si \\
\end{pmatrix}
\begin{pmatrix}
    A \Xq \\
    \p A \Xq \\
\end{pmatrix}
= \begin{pmatrix}
    0 \\
    0 \\
  \end{pmatrix}.
\end{equation*}
The determinant of the square matrix of order $2$, that is, $\si^{2}+4h^{2} \geq \si^{2}\neq 0$, so we get $A\Xq=0$ for any $\Xq \in \mathfrak h$. Substituting~$\Xq$ as $\Kt$ and $\Ks$, it implies $A\Kt=0$ and $A\Ks=0$, respectively. Hence, we can assert that the distribution $\QP$ is invariant under the shape operator, that is, $M$ is a $\QP$-invariant real hypersurface. Thus by virtue of Theorem~$\rm A$, we conclude that $M$ with our assumptions must be a model space of Type~$(A)$.

\vskip 5pt

Summing up these discussions, we conclude that if a Hopf hypersurface $M$ in complex two-plane Grassmannians $\GBt$, $m \geq 3$, satisfying \eqref{C-1} and $\al \neq 2k$, then $M$ is of Type~$(A)$ or $(B)$.

\vskip 3pt

Hereafter, let us check whether $S$ of a model space of Type~$(A)$ (or of Type~$(B)$) satisfies the Reeb parallelism with respect to $\gtw$ by \cite[Proposition~$\rm 3$]{BS1} (or \cite[Proposition~$\rm 2$]{BS1}, respectively).

\vskip 3pt

Let us denote by $M_{A}$ a model space of Type~$(A)$. From now on, using the equations \eqref{3.1}, \eqref{3.3} and \cite[Proposition~$\rm 3$]{BS1}, let us check whether or not $S$ satisfies \eqref{3.7} which is equivalent to our condition \eqref{C-1} for each eigenspace $T_{\alpha}$, $T_{\beta}$, $T_{\lambda}$, and $T_{\mu}$ on $T_{x}M_{A}$, $x \in M_{A}$. In order to do, we find one equation related to $S$ from \eqref{3.7} using the property of $M_{A}$, $\x =\xo$ as follows.
\begin{equation} \label{A1}
\begin{split}
(\gtw_{\x} S)X & = - h(\nabla_{\x}A)X + (\nabla_{\x}A)AX + A(\nabla_{\x}A)X + kh\Ph AX\\
&  \quad \  - k\Ph A^2 X - khA\Ph X + kA^2 \Ph X,
\end{split}
\end{equation}
since $h=\al + 2\beta + 2(m-2)\lambda$ is a constant.

\vskip 3pt

\noindent {\bf Case A-1\,:}\quad $X=\xi(=\xi_1) \in T_\alpha$.

\vskip 3pt

Since $(\N_{\x}A)\x = 0$, we see that $(\gtw_{\x} S)\x=0$ from the equation~\eqref{A1}. It means that the Ricci
tensor $S$ becomes GTW Reeb parallel on $T_{\al}$.

\vskip 3pt

\noindent {\bf Case A-2\,:}\quad $X \in T_{\beta} = \text{Span}\{ \xt, \xi_{3}\}$.

\vskip 3pt

For $\xm \in T_{\beta}$, $\mu=2,3$ we have
\begin{equation*}
\begin{split}
(\N_{\x}A)\xm & = \beta (\N_{\x}\xm) - A (\N_{\x}\xm)\\
              & = \beta \qmt(\x) \xmo - \beta \qmo(\x) \xmt + \al \beta \pmx \\
                & \quad - \qmt(\xi) A\xmo + \qmo(\x)A\xmt - \al A \pmx,
\end{split}
\end{equation*}
which follows that $(\N_{\x}A)\xt=0$ and $(\N_{\x}A)\xh=0$. Therefore, from the equation~\eqref{A1} we obtain, respectively,
\begin{equation*}
\begin{split}
(\gtw_\xi S) \xtw & = kh\p A\xtw - k\p A^{2}\xtw - khA\p\xtw + kA^{2}\p\xtw \\
                  & = (-kh\beta + k\beta^{2} + kh\beta - k\beta^{2})\xth =0,
\end{split}
\end{equation*}
and
$(\gtw_{\xi} S) \xth =0$ by similar methods. So, we assert that the Ricci tensor $S$ of~$M_{A}$ is Reeb parallel on $T_{\beta}$.

\vskip 3pt
By the structure of a tangent vector space $T_{x}M_{A}$ at $x\in M_{A}$, we see that the distribution $\Q$ is composed of two eigenspaces $T_{\lambda}$ and $T_{\mu}$. On this distribution $\Q = T_{\lambda} \oplus T_{\mu}$ we obtain
\begin{equation}\label{eq: 4.8}
(\N_{\x}A)X=\al \p A X - A \p AX + \p X + \po X
\end{equation}
by virtue of the Codazzi equation~\cite[Section~$2$]{LCW}. Using this equation we consider the following two cases.

\vskip 3pt

\noindent {\bf Case A-3\,:}\quad $X \in T_\lambda=\{\,X\,| \ X \in \Q,\ JY=J_1 Y\,\}$.

\vskip 3pt

We naturally see that if $X \in T_{\lambda}$, then $\phi X
= \phi_{1} X$. Moreover, the vector $\phi X$ also belong to the
eigenspace $T_{\lambda}$ for any $X\in T_{\lambda}$, that is, $\phi
T_{\lambda} \subset T_{\lambda}$. From these and \eqref{eq: 4.8}, we obtain
\begin{equation*}
(\N_{\x}A)X= (\al \lambda - \lambda^{2} + 2) \p X, \ \text{for}\ X \in T_{\lambda}.
\end{equation*}
From \eqref{A1} and together with these facts, we obtain
\begin{equation*}
(\gtw_\xi S) X = (\al \lambda - \lambda^{2} +2) (2\al -h) \p X,
\end{equation*}
which implies that $S$ must be Reeb parallel for $\gtw$ on $T_{\lambda}$, since $(\al \lambda - \lambda^{2} +2)=0$.

\vskip 3pt

\noindent {\bf Case A-4\,:}\quad $X \in T_\mu=\{\,X\,| \ X \in {\Q},\ JY=-J_1 Y\,\}.$

\vskip 3pt

If $X \in T_{\mu}$, then $\phi X = -\phi_{1}X$, $\phi T_{\mu} \subset T_{\mu}$ and $\mu =0$. So, from~\eqref{eq: 4.8}, we obtain $(\N_{\x}A)X=0$, moreover $(\gtw_{\xi}S)X = 0$ for any $X \in T_{\mu}$.

\vskip 5pt

\noindent Summing up all cases mentioned above, we can assert that $S$ of real hypersurfaces $M_{A}$ of Type~$(A)$ in $\GBt$ is GTW Reeb parallel.

\vskip 5pt

Now let us consider our problem for a model space of Type~$(B)$, which will be denoted by $M_{B}$. In order to do this, let us calculate the fundamental equation related to the covariant derivative of $S$ of $M_{B}$ along the direction of $\x$ in GTW connection. On $T_{x}M_{B}$, $x \in M_{B}$, since $\x \in \Q$ and $h=\text{Tr}(A)=\al+(4n-1)\beta$ is a constant, the equation \eqref{3.7} is reduced to
\begin{equation*}\label{eq: 4.9}
\begin{split}
(\gtw_{\x}S)X & =  4(k-\al) \sum_{\nu =1}^3 \Big \{\eta_{\nu}(\p X)\x_{\nu} -  \EN(X)\pn\x \Big\} \\
                & \ \ - h(\nabla_{\x}A)X + (\nabla_{\x}A)AX + A(\nabla_{\x}A)X \\
                & \ \ + kh\Ph AX - k\Ph A^2 X - khA\Ph X + kA^2 \Ph X.
\end{split}
\end{equation*}
Moreover, by the equation of Codazzi and \cite[Proposition~$\rm 2$]{BS1} we obtain that for any $X \in T_{x}M_{B}$
\begin{equation}\label{eq: 4.10}
\begin{split}
(\N_{\x}A)X & =\al \p AX - A \p AX + \p X - \sum_{\nu=1}^{3}\big\{\e_{\nu}(X)\p_{\nu}\x + 3g(\p_{\nu}\x, X)\x_{\nu}\big\}\\
            & = \left\{ \begin{array}{ll}
                0                       & \mbox{if}\ \  X \in T_{\al}\\
                \al \beta \p \x_{\ell} & \mbox{if}\ \  X \in T_{\beta}=\text{Span}\{\x_{\ell}|\, \ell=1,2,3\}\\
                -4 \x_{\ell}           & \mbox{if}\ \  X \in T_{\gamma}=\text{Span}\{\p \x_{\ell}|\, \ell=1,2,3\}\\
                (\al \lambda + 2) \p X  & \mbox{if}\ \  X \in T_{\lambda}\\
                (\al \mu + 2) \p X      & \mbox{if}\ \  X \in T_{\mu}.\\
\end{array}\right.
\end{split}
\end{equation}
From these two equations, it follows that
\begin{equation}\label{B1}
(\gtw_{\x}S)X  = \left\{ \begin{array}{ll}
                0                                       & \mbox{if}\ \  X=\xi \in T_{\al}\\
                (\al-k)(4-h \beta + \beta^{2})\p \x_{\ell} & \mbox{if}\ \  X=\x_{\ell} \in T_{\beta} \\
                (4(\al -k) + (h-\beta)(4+k\beta))\x_{\ell} & \mbox{if}\ \ X=\p \x_{\ell} \in T_{\gamma}\\
                (h-\beta)(k \lambda - k \mu - \al \lambda -2)\p X  & \mbox{if}\ \  X \in T_{\lambda}\\
                (h-\beta)(k \mu - k \lambda - \al \mu -2)\p X      & \mbox{if}\ \  X \in T_{\mu}.\\
\end{array}\right.
\end{equation}
So, we see that $M_{B}$ has Reeb parallel GTW-Ricci tensor, when $\al$ and $h$ satisfies the conditions $\al = k$ and $h-\beta=0$, which means $r=\frac{1}{2}\cot^{-1}(\frac{-k}{4(2n-1)})$. Moreover, this radius $r$ satisfies our condition $\al \neq 2k$.

\vskip 5pt

Hence summing up these considerations, we give a complete proof of our Theorem~$1$ in the introduction.~\hspace{8.3cm}$\Box$

\vspace {0.15in}
\section{Proofs of Corollary~$1$ and Theorem~$2$}\label{section 5}
\setcounter{equation}{0}
\renewcommand{\theequation}{5.\arabic{equation}}
\vspace{0.13in}

In section~\ref{section 4} we obtained the classification of Hopf hypersurfaces $M$ with GTW-Reeb parallel Ricci tensor and $\al \neq 2k$. Thus in present section we will consider the case $\al = 2k$ related to the GTW-Reeb parallelism of Ricci tensor of a Hopf hypersurface $M$ in $\GBt$, $m \geq 3$.

\vskip 3pt

Now let us prove Corollary~$1$ in the introduction.

\vskip 3pt

Our condition $\al = 2k$ means that $\al$ is constant. From this we assert that $\x$ belongs to either $\Q$ or $\QP$. For $\x \in \Q$, it is a well-known fact that a Hopf hypersurface in $\GBt$, $m\geq 3$ must be a model space $M_{B}$ of Type~$(B)$ (see \cite{LS}). On the other hand, from~\eqref{B1} and $\al=2k$, the GTW covariant derivative of Ricci tensor $S$ of $M_{B}$ along the direction of $\x$ is given
\begin{equation}\label{eq: 5.1}
(\gtw_{\x}S)X  = \left\{ \begin{array}{ll}
                0                                       & \mbox{if}\ \  X=\xi \in T_{\al}\\
                k(4-h \beta + \beta^{2})\p \x_{\ell} & \mbox{if}\ \  X=\x_{\ell} \in T_{\beta} \\
                (4k + (h-\beta)(4+k\beta))\x_{\ell} & \mbox{if}\ \ X=\p \x_{\ell} \in T_{\gamma}\\
                -(h-\beta)(k \beta + 2)\p X  & \mbox{if}\ \  X \in T_{\lambda}\\
                -(h-\beta)( k \beta + 2)\p X      & \mbox{if}\ \  X \in T_{\mu}.\\
\end{array}\right.
\end{equation}
Actually, since $\al = 2k$, we naturally have $k \beta + 2=0$. It follows that $S$ is GTW Reeb parallel on $T_{\lambda}$ and $T_{\mu}$. In order to be the GTW-Reeb parallel Ricci tensor on the other eigenspaces $T_{\beta}$ and $T_{\gamma}$, we should have the following two equations,
$$
(4-h \beta + \beta^{2})=0
$$
and
$$
4k + (h-\beta)(4+k\beta)=0.
$$
 Combining these two equations, we have $2k + h - \beta=0$. Since $h=\al + 3 \beta + (4n-4) (\lambda + \mu) = \al +(4n-1)\beta$ and $\al = 2k$, it follows that $\al = -(2n-1)\beta$. By virtue of \cite[Proposition~$\rm 2$]{BS1}, $\al = -2 \tan(2r)$ and $\beta = 2 \cot (2r)$ where $r \in ( 0, \pi/4)$, we obtain $\tan (2r)=\sqrt{2n-1}$. From such assertions, we conclude that a model space of Type~$(B)$ has GTW-Reeb parallel Ricci tensor for special radius $r$ such that $r=\frac{1}{2}\tan^{-1}(\sqrt{2n-1})$, which gives us a complete proof of Corollary~$1$. \hspace{7.7cm}$\Box$

\vskip 7pt

On the other hand, for the case $\x \in \QP$, the equation \eqref{4.6} becomes
\begin{equation*}
(\x h) AX = 0
\end{equation*}
under the assumption of $\al = 2k$. For the case $\x h\neq 0$, it follows that $AX=0$. If $X=\x$, then $\al=0$, which gives a contradiction. From this, we assert the following for the case $\x \in \QP$:
\begin{rem}
Let $M$ is a Hopf hypersurface, that is, $A\x=\al \x$ where $\al = 2k$, in $\GBt$, $m \geq 3$, with GTW-Reeb parallel Ricci tensor, $\gtw_{\x}S=0$. If $\x \in \QP$, then we only get the result that the trace $h$ of the shape operator $A$ is constant along the direction of $\x$, that is, $\x h=0$.
\end{rem}

From such a point of view, we now only focus our attention to the Ricci Reeb parallelism in GTW connection on the distribution~$\mathfrak h=\{X \in TM\,|\, X \bot \xi \}$, as given by the proof of Theorem~$2$.

\vskip 3pt

As mentioned above in the proof of Corollary~$1$, we see that $\x \in \Q$ or $\x \in \QP$, because $M$ is a Hopf hypersurface in $\GBt$ with $\al = 2k$. Moreover, if $\x \in \Q$, then~$M$ must be a model space of Type~$(B)$.

\vskip 5pt

Now, let us consider the case $\x \in \QP$. Then by Suh~\cite{S01} we have the following key lemma in the proof of Theorem~$2$.
\begin{lemma}\label{lemma 5.1}
Let $M$ be a Hopf hypersurface, that is, $A\x=\al \x$ where $\al=2k$, in $\GBt$, $m \geq 3$. If $M$ satisfies the following properties\,:
\begin{enumerate}[\rm (i)]
\item{the Reeb vector field $\x$ belongs to the distribution $\QP$,}
\item{the Ricci tensor $S$ is Reeb parallel with respect to both the Levi-Civita and GTW connections on $\mathfrak h$, that is, $(\gtw_{\x}S)X=0$ and $(\N_{\x}S)X=0$ for any tangent vector field $W \in \mathfrak h$,}
\end{enumerate}
then $M$ must be a model space of Type~$(A)$ or Type~$(B)$ in $\GBt$.
\end{lemma}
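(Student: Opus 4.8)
The plan is to fix $\x = \xo$, which is permitted by hypothesis~(i), and to show that the two Reeb-parallelism conditions in~(ii) collapse, once the Hopf condition and $\al = 2k$ are used, into the single scalar relation $(\x h)AX = 0$ on $\mathfrak{h}$. From this relation the invariance of $\QP$ follows in one branch and full Levi-Civita Reeb parallelism in the other, so that Theorem~A and Theorem~C finish the argument.

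First I would record the shape-operator identity valid for $\x = \xo$. Exactly as in Section~\ref{section 4}, the Hopf condition, the Codazzi equation and the formulas of~\cite{LCW} give $(\N_{\x}A)X = \frac{\al}{2}\p AX - \frac{\al}{2}A\p X + (\x\al)\e(X)\x$, which reduces to $(\N_{\x}A)X = k(\p AX - A\p X)$ because $\al = 2k$ is constant and hence $\x\al = 0$. Substituting this into~\eqref{3.4}, I would use two cancellations: the $\QP$-sum $-4\al\SN\{g(\pn\x,X)\x_{\nu} + \EN(X)\pn\x\}$ vanishes identically on $\mathfrak{h}$ when $\x = \xo$, a direct consequence of $\phi_2\xo = -\xh$ and $\phi_3\xo = \xt$; and the three remaining terms reassemble, via the commutator identity $S\p X - \p SX = hA\p X - A^2\p X - h\p AX + \p A^2 X$ from Section~\ref{section 4}, into $h(\N_{\x}A)X - (\N_{\x}A)AX - A(\N_{\x}A)X = k(\p SX - S\p X)$. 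Together these give the reduced formula $(\N_{\x}S)X = (\x h)AX + k(\p SX - S\p X)$ for every $X \in \mathfrak{h}$.

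Inserting this into~\eqref{3.5}, namely $(\gtw_{\x}S)X = (\N_{\x}S)X - k\p SX + kS\p X$, the commutator contributions cancel and I obtain the clean identity $(\gtw_{\x}S)X = (\x h)AX$ for all $X \in \mathfrak{h}$. The GTW part of hypothesis~(ii) then forces $(\x h)AX = 0$ on $\mathfrak{h}$, while the Levi-Civita part leaves $(\N_{\x}S)X = 0$ on $\mathfrak{h}$ in reserve. I would split into two cases. If $\x h \neq 0$ on some open set, then $AX = 0$ for all $X \in \mathfrak{h}$; in particular $A\xt = A\xh = 0$ and $A\x = \al\x$ all lie in $\QP$, so $\QP$ is $A$-invariant and Theorem~A yields Type~(A) or~(B). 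If instead $\x h = 0$, I would compute $S\x = (4m + h\al - \al^2)\x$ from~\eqref{3.1} and differentiate along $\x$ to find $(\N_{\x}S)\x = \al(\x h)\x = 0$; combined with $(\N_{\x}S)X = 0$ on $\mathfrak{h}$ this upgrades the hypothesis to full Levi-Civita Reeb parallelism $\N_{\x}S = 0$. Since $M$ is Hopf with non-vanishing geodesic Reeb flow ($\al = 2k \neq 0$), Theorem~C then gives Type~(A) or~(B), as required.

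The technical heart, and the step I expect to be the main obstacle, is the reduction in the second paragraph: checking that the $\QP$-sum is annihilated on $\mathfrak{h}$ and that the three $(\N_{\x}A)$-terms recombine precisely into $k(\p SX - S\p X)$. Both cancellations hinge on the special form of $(\N_{\x}A)$ available only when $\x \in \QP$ and on a careful bookkeeping of the quaternionic bracket relations; a single sign error there corrupts the identity $(\gtw_{\x}S)X = (\x h)AX$ and destroys the subsequent dichotomy. Once that identity is in hand, the remainder is the routine case split closed by Theorem~A and Theorem~C.
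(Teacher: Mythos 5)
Your proposal is correct and follows essentially the same route as the paper: the paper obtains $(\xi h)AW=0$ on $\mathfrak h$ directly from equation (4.6) with $\tilde{\kappa}=\frac{\alpha}{2}-k=0$ (the same computation you redo via the identity $(\gtw_{\xi}S)X=(\xi h)AX$), and then runs the identical dichotomy — $\xi h\neq 0$ forces $A\mathfrak h=0$, hence $\QP$-invariance and Theorem~A, while $\xi h=0$ upgrades hypothesis (ii) to full Reeb parallelism $\nabla_{\xi}S=0$ and invokes Suh's classification (Theorem~C).
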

\begin{proof}
As investigated above, from the assumption of $\al = 2k$ and the equation~\eqref{4.6} we have
$$
(\x h) AW = 0
$$
for any tangent vector field $W \in \mathfrak h$.

From this, we see that the distribution~$\mathfrak h$ is totally geodesic, that is, $AW=0$ for any $W \in \mathfrak h$, if $(\x h) \neq 0$. So, we can assert that $M$ is a $\QP$-invariant hypersurface in $\GBt$, that is, $g(A\Q, \QP)=0$.

\vskip 3pt

Next, we consider the case $(\x h)=0$. From \eqref{3.1} we get $S\x = (4m+ h\al - \al^{2})\x$. Differentiating this formula along the direction of $\x$ and using our assumptions, $A\x = \al \x$, $(\x h) = (\x \al) = 0$, it follows that $(\N_{\x}S)\x = 0$. It implies that the Ricci tensor $S$ becomes Reeb parallel. Then by virtue of the result given by Suh~\cite{S01} we give a complete proof of our Lemma.
\end{proof}

As a consequence, we assert that if $M$ is a Hopf hypersurface, $\al = 2k$, in $\GBt$ satisfying two Ricci Reeb parallelism defined by $(\N_{\x}S)W=0$ and $(\gtw_{\x}S)W=0$ for any $W \in \mathfrak h$, then it must be either a real hypersurface of Type~$(A)$ or Type~$(B)$.

\vskip 3pt

From now on, let us consider the converse problem. In other words, we now check whether the Ricci tensor $S$ of model spaces $M_{A}$ or $M_{B}$ in $\GBt$ satisfies the conditions in Theorem~$2$ or not.

\vskip 3pt

By \cite[Proposition~$\rm 3$]{BS1} and the checking for a model space $M_{A}$ given in the introduction and section~\ref{section 4}, respectively, we see that $M_{A}$ is a Hopf hypersurface in $\GBt$ with the GTW-Reeb parallel Ricci tensor on $\mathfrak h \subset TM_{A}$.

\vskip 3pt

Now let us show that the Ricci tensor $S$ of $M_{A}$ is Reeb parallel in $\N$ on $\mathfrak h$, that is, $(\N_{\x}S)W=0$ for $W \in \mathfrak h \subset TM_{A}$. By virtue of \cite[Proposition~$\rm 3$]{BS1}, the equation~\eqref{3.4} can be written as
\begin{equation}\label{5.1}
\begin{split}
(\N_{\x}S)Y & = h(\N_{\x}A)Y - (\N_{\x}A)AY - A(\N_{\x}A)Y  \\
            & = h(\N_{\x}A)Y - \tilde{\kappa}(\N_{\x}A)Y - A(\N_{\x}A)Y,
\end{split}
\end{equation}
where $AY=\tilde{\kappa} Y$ for any $W \in \mathfrak h\subset TM_{A}$. Moreover, from the equation of Codazzi, we obtain
\begin{equation}\label{5.2}
\begin{split}
(\N_{\x}A)Y & = (\N_{Y}A)\x + \p Y + \po Y - 2\e_{2}(Y)\xh + 2\e_{3}(Y)\xt \\
            & = \al \p AY - A \p AY + \p Y + \po Y - 2\e_{2}(Y)\xh + 2\e_{3}(Y)\xt,
\end{split}
\end{equation}
since $A\x=\al \x$ and $h=\text{Tr}A = \al + 2 \beta + (2m-2)\lambda$ where the eigenvalues $\al$, $\beta$, $\lambda$ and $\mu$ of $M_{A}$ are constant. Since $\mathfrak h=T_{\beta}\oplus T_{\lambda} \oplus T_{\mu}$, let us check whether or not the Ricci tensor~$S$ of~$M_{A}$ satisfies the property of the Reeb-parallelism for each eigenspace.

\vskip 3pt

\noindent {\bf Case A-1\,:}\quad $Y \in T_\beta = \text{Span}\{ \xt, \xh\}$

\vskip 3pt

From \eqref{5.2}, we obtain $(\N_{\x}A)\xt  = (\beta^{2} - \al \beta  -2)\xh $, which implies $(\N_{\x}A)\xt =0$ since $\beta^{2} - \al \beta  -2 =0$. So, we see that $(\N_{\x}S)\xt=0$ by \eqref{5.1}. Similarly, if we put $Y=\xh$ in \eqref{5.2}, then $(\N_{\x}A)\xh = -(\beta^{2} - \al \beta  -2)\xt=0$, because $\al \beta=2 \cot^{2}(\sqrt{2}r)-2$. From this and \eqref{5.1}, we see that $(\N_{\x}S)\xh=0$.

\vskip 3pt

\noindent {\bf Case A-2\,:}\quad $Y \in T_\lambda = \{ Y \bot \, \xo, \xt, \xh \, |\, \p Y =\po Y\}$

\vskip 3pt

If $Y \in T_{\lambda}$, then $\p Y \in T_{\lambda}$. From this, the equation~\eqref{5.2} becomes $(\N_{\x}A)Y = (\al \lambda - \lambda^{2}+ 2) \p Y$. It follows $(\N_{\x}A)Y = 0$, since $\al \lambda = 2 \tan^{2}(\sqrt{2}r)-2$. Hence we see that the Ricci tensor $S$ of $M_{A}$ becomes Reeb parallel on $T_{\lambda}$, that is, $(\N_{\x}S)Y=0$ for any $Y \in T_{\lambda}$.

\vskip 3pt

\noindent {\bf Case A-3\,:}\quad $Y \in T_\mu = \{ Y \bot \, \xo, \xt, \xh \, |\, \p Y =-\po Y\}$

\vskip 3pt

Since $Y \in T_{\mu}$, then $\p Y = - \po Y$ and $\mu=0$. From these, the equation~\eqref{5.2} becomes $(\N_{\x}A)Y = 0$ for $Y \in T_{\mu}$. Hence it implies that the Ricci tensor of $M_{A}$ is Reeb parallel on $T_{\mu}$, that is, $(\N_{\x}S)Y=0$ for any $Y \in T_{\mu}$.

\vskip 3pt

\noindent Summing up three cases above, $M_{A}$ have Reeb parallel Ricci tensor in the Levi-Civita connection $\N$ on the distribution $\mathfrak h$.

\vskip 5pt

On the other hand, let us check whether $M_{B}$ satisfies our conditions, $\N_{\x}S=0$ and $\gtw_{\x}S=0$ on $\mathfrak h \subset TM_{B}$. Suppose that the Ricci tensor~$S$ of $M_{B}$ is Reeb parallel, $(\N_{\x}S)X=0$ for $X \in \mathfrak h$. From \eqref{3.4} and \eqref{eq: 4.10} we obtain
\begin{equation*}
(\N_{\x}S)X  = \left\{ \begin{array}{ll}
                (-4 \al + h\al \beta - \al \beta^{2})\p \x_{\ell} & \mbox{if}\ \  X=\x_{\ell} \in T_{\beta} \\
                -4(\al + h - \beta) \x_{\ell}                     & \mbox{if}\ \ X=\p \x_{\ell} \in T_{\gamma}\\
                (h-\beta)(\al \lambda + 2)\p X                    & \mbox{if}\ \  X \in T_{\lambda}\\
                (h-\beta)(\al \mu + 2)\p X                        & \mbox{if}\ \  X \in T_{\mu}.\\
\end{array}\right.
\end{equation*}
Since the Ricci tensor~$S$ is Reeb parallel on the eigenspace $T_{\lambda}$, we have $(h-\beta)(\al \lambda + 2)=0$. It implies that \begin{equation}\label{eq: 5.4}
(h-\beta)=0,
\end{equation}
because $(\al \lambda +2) \neq 0$. On the other hand, for $T_{\gamma}$ we get $(\al + h - \beta)=0$, which means $\al =0$ from \eqref{eq: 5.4}. It makes a contraction. Thus we assert that there does not exist $M_{B}$ satisfying the conditions in Theorem~$2$.

%
%
\vskip 5pt

With such assertions we give a complete proof of Theorem~$2$ in the introduction.~\hspace{11.5cm} $\Box$

\vspace {0.15in}
\section{Proofs of Theorem~$3$ and Corollary~$2$}\label{section 6}
\setcounter{equation}{0}
\renewcommand{\theequation}{6.\arabic{equation}}
\vspace{0.13in}

First we want to give a proof of Theorem~$3$. Among the conditions in Theorem~$2$, we focus our attentions to the assumptions related to the Reeb parallelism of Ricci tensor $S$. Actually, we consider that on $\mathfrak h$ two covariant derivatives of $S$ in Levi-Civita and GTW connections are equal to zero, that is, $(\N_{\x}S)W = 0 = (\gtw_{\x}S)W$ for any tangent vector field $W \in \mathfrak h =\{X\in TM|\,X \bot \x \}$. So, in this section, we will consider the following condition related to the Reeb parallelism of Ricci tensor $S$.
\begin{equation}\label{C-2}
(\N_{\xi}S)X=(\gtw_{\xi} S)X
\tag{C-2}
\end{equation}

for any tangent vector field $X$ on $M$. By virtue of the equation~\eqref{3.5}, the condition~\eqref{C-2} is equivalent to the $S \p = \p S$. On the other hand, Suh proved in \cite{S02} that a Hopf hypersurface $M$ in $\GBt$, $m \geq 3$, with commuting Ricci tensor is locally congruent a tube of radius $r$ over a totally geodesic $\GBo$ in $\GBt$. Then we conclude that a Hopf hypersurface $M$ in $\GBt$, $m\geq 3$, satisfying the condition~\eqref{C-2} if and only if $M$ is of Type~$(A)$, which gives us a complete proof of Theorem~$3$.

\vskip 5pt

By Theorem~$3$, if a real hypersurface $M$ in $\GBt$ satisfies $\N S=\gtw S$, then naturally \eqref{C-2} holds on $M$. So $M$ is of Type~$(A)$. Now let us check whether a model space $M_{A}$ of Type~$(A)$ satisfies our condition
\begin{equation}\label{C-3}
(\gtw_{X} S)Y=(\N_{X} S)Y
\tag{C-3}
\end{equation}
for any tangent vector fields $X$, $Y \in T_{x}M_{A}$, $x \in M_{A}$. In order to do this, we assume that the Ricci tensor $S$ of $M_{A}$ satisfies~\eqref{C-3}. That is, we have
\begin{equation}\label{eq: 6.1}
\begin{split}
0& =(\gtw_{X} S)Y-(\N_{X} S)Y \\
 & = g(\phi AX, SY)\x - \eta(SY) \phi AX - k\eta(X)\phi SY \\
 & \quad \quad - g(\phi AX,Y)S \x + \eta(Y)S\phi AX + k\eta(X)S\phi Y
\end{split}
\end{equation}
for any $X, Y \in T_{x}M_{A}$.

Since $T_{x}M_{A}=T_{\al} \oplus T_{\beta} \oplus T_{\lambda} \oplus T_{\gamma}$, the equation~\eqref{eq: 6.1} holds for $X\in T_{\beta}$ and $Y\in T_{\al}$. For the sake of convenience we put $X=\xt \in T_{\beta}$ and $Y=\xi \in T_{\al}$. Since $S\x=\delta \x$ and $S \xh =\sigma \xh$ where $\delta=(4m+h\al-\al^{2})$ and $\sigma=(4m+6+h\beta-\beta^{2})$, the equation~\eqref{eq: 6.1} reduces to $\beta(\delta - \sigma) \xh=0$. By \cite[Proposition~$\rm 3$]{BS1}, since the principal curvature $\beta=\sqrt{2} \cot(\sqrt{2}r)$ for $r \in (0, \pi/\sqrt{8}$) is non-zero, it follows $(\delta - \sigma)=0$. In other words, by \cite[Proposition~$\rm 3$]{BS1} we obtain
\begin{equation*}
\begin{split}
-(\delta - \sigma)& = 6- \al\beta + \beta^{2} + (2m-2)\beta \lambda - (2m-2)\al \lambda \\
                  & = 8-4(m-1)\tan^{2}(\sqrt{2}r),\\
\end{split}
\end{equation*}
which gives us
\begin{equation}\label{eq: 6.2}
\tan^{2}(\sqrt{2}r)=\frac{2}{m-1}.
\end{equation}
In addition, since \eqref{eq: 6.1} holds for $X \in T_{\lambda}$ and $Y=\x$, we obtain
\begin{equation*}
0 = (\gtw_{X}S)\x - (\N_{X}S)\x = \lambda (\tau-\delta)\p X,
\end{equation*}
where in the second equality we have used $\p X \in T_{\lambda}$ and $SX = (4m+6+h\lambda-\lambda^{2})X=\tau X$ for any $X \in T_{\lambda}$. Because $\lambda=-\sqrt{2}\tan(\sqrt{2}r)$ where $r \in (0, \pi/\sqrt{8})$ is non-zero, we have also
$$
\tau-\delta=0.
$$
By straightforward calculation it is
\begin{equation*}
\begin{split}
\tau-\delta & = 6+h\lambda-\lambda^{2}-h\al+\al^{2} \\
            & = 4m - 4 \cot^{2}(\sqrt{2}r)=0.
\end{split}
\end{equation*}
From \eqref{eq: 6.2}, it becomes $2m+2=0$, which gives us a contradiction. Accordingly, it completes our Corollary~$2$ given in the introduction. \hspace{4.5cm}$\Box$

\vskip 15pt
%



\begin{thebibliography}{99}
\bibitem{Al-01} D. V. Alekseevskii, Compact quaternion spaces, {\it Func. Anal. Appl.},~{\bf 2} (1966), 106-114.

\bibitem{BS1} J. Berndt and Y.J. Suh, Real hypersurfaces in complex two-plane Grassmannians, {\it Monatsh. Math.},~{\bf 127} (1999), 1-14.

\bibitem{CH1} J.T. Cho, $\rm CR$ structures on real hypersurfaces of a complex space form, {\it Publ. Math. Debrecen},~{\bf 54} (1999), 473-487.

\bibitem{CH2} J.T. Cho, Levi parallel hypersurfaces in a complex space form, {\it Tsukuba J. Math.},~{\bf 30}(2006), 329-344.

\bibitem{JKLS} I. Jeong, M. Kimura, H. Lee and Y.J. Suh,
 Real hypersurfaces in complex two-plane Grassmannians with generalized Tanaka-Webster Reeb parallel shape operator, {\it Monatsh. Math.},~{\bf 171}, (2013), 357-376
\bibitem{JLS} I. Jeong, H. Lee and Y.J. Suh, Levi-Civita and generalized Tanaka-Webster covariant derivatives for real hypersurfaces in complex two-plane Grassmannians~(submitted).

\bibitem{JMPS}  I. Jeong,  Machado, Carlos J. D. P\'erez and Y. J. Suh,
 Real hypersurfaces in complex two-plane Grassmannians with ${\mathcal D}^{\bot}$-parallel structure Jacobi
 operator, {\it Inter. J. Math}.,~{\bf 22} (2011), no. 5, 655--673.


\bibitem{LCW} H. Lee, Y.S. Choi, and C. Woo, Hopf hypersurfaces in complex two-plane Grassmannians with Reeb parallel shape operator, {\it Bull. Malaysian Math. Soc.} (2014) (in press).

\bibitem{LS} H. Lee and Y.J. Suh, Real hypersurfaces of type~$B$ in complex two-plane Grassmannians related to the Reeb vector, {\it Bull. Korean Math. Soc.},~{\bf 47} (2010), no. 3, 551-561.
\bibitem{MP 01} C.J.G. Machado and J.D. P\'erez, Real hypersurfaces in complex two-plane Grassmannians some of whose Jacobi operators are $\x$-invariant, {\it Internat. J. Math.},~{\bf 23} (2010), 1250002 (12pages).
\bibitem{MP 02} C.J.G. Machado and J.D. P\'erez, On the structure vector field of a real hypersurfaces in complex two-plane Grassmannians, {\it Cent. Eur. J. Math.},~{\bf 10} (2010), 451-455.

\bibitem{PeSan} J.D. P\'{e}rez and F.G. Santos, Real hypersurfaces in complex projective space whose structure Jacobi operator is cyclic-Ryan parallel, {\it Kyungpook Math. J.},~{\bf 49} (2009), 211-219.

\bibitem{PS} J.D. P\'erez and Y.J. Suh, The Ricci tensor of real hypersurfaces in complex two-plane Grassmannians, {\it J. Korean Math. Soc.},~{\bf 44} (2007), 211-235.

\bibitem{PS2} J.D. P\'erez and Y.J. Suh, On the Ricci tensor of a real hypersurface in complex two-plane Grassmannians~(submitted).

\bibitem{PeSuh} J.D. P\'erez and Y.J. Suh, Generalized Tanaka-Webster and covariant derivatives on a real hypersurface in a complex projective space~(submitted).

\bibitem{PeSuWa} J.D. P\'erez and Y.J. Suh, and Y. Watanabe.Generalized Einstein real hypersurfaces in complex two-plane Grassmannians. {\it J. Geom. Phys.},~{\bf 60} (2010), 1806-1818.

\bibitem{S02} Y.J. Suh, Real hypersurfaces in complex two-plane Grassmannians with commuting Ricci tensor, {\it J. Geom. Phys.},~{\bf 60} (2010), 1792-1805.

\bibitem{S} Y.J. Suh, Real hypersurfaces in complex two-plane Grassmannians with parallel Ricci tensor, {\it Proc. Royal Soc. Edinb. A.},~{\bf 142} (2012), 1309-1324.

\bibitem{S01} Y.J. Suh, Real hypersurfaces in complex two-plane Grassmannians with Reeb parallel Ricci tensor, {\it J. Geome. Phys.},~{\bf 64} (2013), 1-11.
\end{thebibliography}
\end{document}